\theoremstyle{plain}
    \newtheorem{thm}{Theorem}[section]
    \newtheorem{corollary}[thm]{Corollary}
    \newtheorem{lemma}[thm]{Lemma}
    \newtheorem{proposition}[thm]{Proposition}
    \newtheorem{question}[thm]{Question}
    \newtheorem{theorem}[thm]{Theorem}
\theoremstyle{definition}
    \newtheorem*{notation*}{Notation and Terminology}
    \newtheorem{remark}[thm]{Remark}
\theoremstyle{remark}
\newcommand{\Rmnum}[1]{\expandafter\@slowromancap\romannumeral #1@}
\begin{document}

\title[$Q$-complex tori]
{Characterization of $Q$-complex tori via endomorphisms -- an addendum to ``Int-amplified endomorphisms of compact K\"ahler spaces''}

\author{Guolei Zhong}
\address
{
\textsc{Department of Mathematics,
	National University of Singapore,
	Singapore 119076, Republic of Singapore.
\newline\indent 
Current address: Center for Complex Geometry, Institute for Basic Science (IBS), 55 Expo-ro,  Yuseong-gu, Daejeon, 34126, Republic of Korea.
}}
\email{zhongguolei@u.nus.edu, guolei@ibs.re.kr}

\begin{abstract}
In this short note, we consider a normal compact K\"ahler klt space $X$ whose canonical divisor $K_X$ is pseudo-effective, and give a dynamical criterion for $X$ to be a $Q$-complex torus. 
We show that, if such $X$ admits an int-amplified endomorphism, then $X$ is a $Q$-complex torus. 
As an application, we prove that, if a simply connected compact K\"ahler (smooth) threefold admits an int-amplified endomorphism, then it is (projective and) rationally connected.
\end{abstract}
\subjclass[2010]{
14E30,   
08A35,  
11G10,  
}

\keywords{ compact K\"ahler space,   equivariant minimal model program, int-amplified endomorphism, $Q$-complex torus}

\maketitle


\section{Introduction}
We work over the field $\mathbb{C}$ of complex numbers. 
Let $(X,\omega)$ be a compact K\"ahler manifold of dimension $n$ such that $c_1(X)=0$ and $\int_Xc_2(X)\wedge\omega^{n-2}=0$.
Following from Yau's solution of Calabi conjecture \cite{Yau78}, the vanishing of the first Chern class $c_1(X)$ implies that we can find a Ricci flat metric $\omega$.
On the other hand, the vanishing $\int_Xc_2(X)\wedge \omega^{n-2}=0$ implies that the full curvature tensor of $\omega$ is identically zero.
By the uniformization theorem, the universal cover of $X$ is an affine space and $X$ is a quotient of a complex torus $T$ by a finite group acting freely on $T$. 

From the viewpoints of birational geometry, 
since the end product of the minimal model program (MMP for short)  is usually singular, it is important to consider the singular version of the uniformization theorem. 
We say that a compact K\"ahler space $X$ is a \textit{$Q$-complex torus}, if there is a finite morphism $T\to X$ from a complex torus $T$, which is \'etale in codimension one (cf.~\cite[Definition 3.6]{Nak99}, \cite[Definition 2.13]{NZ10} and \cite[Section 5]{Zho21}). 
Thanks to many people's efforts, when $X$ is projective, the numerical characterization of $Q$-complex tori has been successfully generalized to singular  varieties (see~\cite{GKP16} and \cite{LT18}). 
In the transcendental case however, the slicing arguments used to reduce a projective variety to a complete intersection surface (by integral ample divisors) do not work any more.
Recently, Kirschner and Graf settled this problem in \cite{GK20} for K\"ahler threefolds, and in higher dimensions, Claudon, Graf and Guenancia settled the uniformization problem in  \cite{CGG23} (cf.~\cite{CGG22}). 

From the dynamical viewpoints, one of the application of the numerical characterization of $Q$-complex tori is to describe the end product of the equivariant minimal model program (E-MMP for short). 
In other words, given a holomorphic self-map $f$ on a compact K\"ahler space $X$ with mild singularities, we are interested in the construction of the $f$-equivariant MMP for  $X$, assuming the existence of an MMP for $X$.  
We refer the reader to our recent works \cite{Zho22} and \cite{Zho23} for the aspect of automorphism groups.

When $X$ is projective, Nakayama first established the E-MMP for surfaces admitting non-isomorphic surjective endomorphisms (see \cite{Nak20a},  \cite{Nak20b}). 
In higher dimensions, Meng and Zhang constructed in \cite{MZ18} and \cite{Men20} the $f$-E-MMP when $f$ is \textit{polarized}, i.e., $f^*H\sim qH$ for some (integral) ample divisor $H$ and integer $q>1$,  
or more generally, $f$ is \textit{int-amplified}, i.e., $f^*H-H=L$ for some (integral) ample divisors $H$ and $L$. 
In particular, the following Theorem \ref{thm-proj-Q-torus}  characterizes numerically the end product of the E-MMP (in the projective case)  by considering the first Chern class and the orbifold second Chern class.
\begin{theorem}[{\cite[Theorem 1.21]{GKP16}, \cite[Theorem 5.2]{Men20}; cf.~\cite[Theorem 3.4]{NZ10}}]\label{thm-proj-Q-torus}
Let $X$ be a normal projective klt variety admitting an int-amplified endomorphism $f$.
Suppose the canonical divisor $K_X$ is pseudo-effective.
Then $X$ is a $Q$-complex torus.
\end{theorem}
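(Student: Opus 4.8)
The plan is to reduce Theorem~\ref{thm-proj-Q-torus} to the numerical characterization of $Q$-complex tori that is already available in the projective case (\cite{GKP16}, \cite{LT18}): a normal projective klt variety $Y$ of dimension $n$ with $K_Y\equiv 0$ and with vanishing orbifold second Chern class against an ample class, $\widehat{c}_2(Y)\cdot[H]^{n-2}=0$, is a $Q$-complex torus. So, writing $n=\dim X$ and fixing an ample class $H$ on $X$, it is enough to prove
\begin{equation*}
\text{(i) } K_X\equiv 0, \qquad\text{and}\qquad \text{(ii) } \widehat{c}_2(X)\cdot[H]^{n-2}=0 .
\end{equation*}

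For (i), I would start from the ramification formula: since $f$ is finite surjective, $K_X=f^{\ast}K_X+R_f$ with $R_f\ge 0$, hence $K_X=(f^{n})^{\ast}K_X+\sum_{i=0}^{n-1}(f^{i})^{\ast}R_f$ for every $n\ge 1$. Set $\gamma:=H^{n-1}\in N_1(X)_{\mathbb{R}}$; as the complete intersection class of an ample divisor, $\gamma$ lies in the interior of the movable cone, so $D\cdot\gamma\ge\varepsilon\lVert D\rVert$ for some $\varepsilon>0$ and every pseudo-effective class $D$. Intersecting the displayed identity with $\gamma$ and using that $(f^{n})^{\ast}K_X$ is pseudo-effective gives $\sum_{i=0}^{n-1}(f^{i})^{\ast}R_f\cdot\gamma\le K_X\cdot\gamma$ for all $n$. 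But int-amplifiedness of $f$ means every eigenvalue of $f^{\ast}$ on $N^1(X)_{\mathbb{R}}$ has modulus $>1$, whence $\lVert(f^{i})^{\ast}v\rVert\to\infty$ for every nonzero $v$; taking $v=R_f$ would make the left-hand side diverge, so $R_f=0$. Thus $f$ is quasi-\'etale and $f^{\ast}K_X=K_X$; applying the same observation to $v=K_X$ forces $K_X=0$ in $N^1(X)_{\mathbb{R}}$, i.e.\ $K_X\equiv 0$.

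For (ii) --- the crux --- I would use that, $f$ being quasi-\'etale, the reflexive pullback of the tangent sheaf satisfies $f^{[\ast]}\mathcal{T}_X\cong\mathcal{T}_X$, so $f^{\ast}\widehat{c}_2(X)=\widehat{c}_2(X)$ in $N^2(X)_{\mathbb{R}}$, and hence by the projection formula $(\deg f)^{n}\,\widehat{c}_2(X)\cdot[H]^{n-2}=\widehat{c}_2(X)\cdot\big((f^{n})^{\ast}H\big)^{n-2}$ for all $n\ge 1$. Miyaoka's generic semipositivity (valid since $X$ is klt with $K_X$ pseudo-effective) makes the left-hand side nonnegative, and the right-hand side grows in $n$ strictly slower than $(\deg f)^{n}$ unless $\widehat{c}_2(X)\cdot[H]^{n-2}=0$ --- ideally because int-amplifiedness propagates to codimension-two cycles, forcing every eigenvalue of $f^{\ast}$ on $N^2(X)_{\mathbb{R}}$ to have modulus $>1$ and hence the $f^{\ast}$-fixed class $\widehat{c}_2(X)$ to vanish outright. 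I expect this to be the main obstacle: if the eigenvalue propagation to $N^2(X)_{\mathbb{R}}$ is not available cleanly, one is pushed to the heavier route of replacing $f$ by a suitable iterate and $X$ by a suitable quasi-\'etale cover (legitimate because a normal projective variety has only finitely many quasi-\'etale covers of each given degree), invoking the Beauville--Bogomolov type decomposition for klt varieties with numerically trivial canonical class, lifting the endomorphism to the splitting cover, and using that its Calabi--Yau and irreducible holomorphic symplectic factors have finite \'etale fundamental group and so carry no int-amplified endomorphism --- leaving the cover abelian, so that $X$ is a $Q$-complex torus. By comparison, step (i) is soft.
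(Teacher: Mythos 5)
Note first that the paper does not actually prove Theorem \ref{thm-proj-Q-torus}: it is quoted from \cite{GKP16} and \cite{Men20}. The closest in-paper argument is the proof of its K\"ahler analogue, Theorem \ref{thm-Q-torus}, which follows exactly the two-step reduction you propose: show $c_1(X)=0$ and $\widetilde{c_2}(X)\cdot[\omega]^{n-2}=0$, then invoke the uniformization/numerical characterization theorem. Your step (i) is correct and matches the standard argument (in the paper, pseudo-effectivity of $-K_X$ is quoted from \cite[Theorem 1.3]{Zho21}, whose proof is the ramification-divisor computation you write out), and quasi-\'etaleness of $f$ then follows from purity of the branch locus.

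The gap is in your primary mechanism for (ii). The class $\widetilde{c_2}(X)$ is \emph{not} an element of $N^2(X)$, the algebra generated by divisor classes modulo (weak) numerical equivalence; it is only a linear functional on $(n-2)$-fold products of such classes. Consequently, ``a fixed vector of an operator all of whose eigenvalues have modulus $>1$ must vanish'' does not apply to $\widetilde{c_2}(X)$; and since weak numerical equivalence is tested only against products of $(1,1)$-classes, the convergence $[x_m]:=((f^m)^*H)^{n-2}/(\deg f)^m\to 0$ in $N^{n-2}(X)$ does not by itself imply $\widetilde{c_2}(X)\cdot[x_m]\to 0$. Your phrase ``grows strictly slower than $(\deg f)^m$'' is precisely what needs proof. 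The paper closes this by a two-sided estimate: Miyaoka--Yau semipositivity gives $\widetilde{c_2}(X)\cdot[x_m]=\widetilde{c_2}(X)\cdot[H]^{n-2}\ge 0$ for all $m$, while for the upper bound one notes that $a[H]^2-\widetilde{c_2}(X)$ lies in the interior of the dual of the cone generated by products of ample (K\"ahler) classes for $a\gg 0$, whence $\widetilde{c_2}(X)\cdot[x_m]\le a[H]^2\cdot[x_m]\to 0$; the eigenvalue input actually used is that all eigenvalues of $f^*|_{N^k}$ for $0<k<n$ have modulus $<\deg f$ (Lemma \ref{lem-tends-zero}), not that they have modulus $>1$ on $N^2$. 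Your fallback via the Beauville--Bogomolov decomposition for klt varieties with $K_X\equiv 0$ is a genuinely different and workable route in the projective case (close in spirit to \cite{Yos21}), but as written it asserts rather than proves its two nontrivial inputs: that $f$, after iteration, lifts to the quasi-\'etale splitting cover, and that the Calabi--Yau and symplectic factors admit no int-amplified endomorphism.
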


In \cite{Zho21}, the author generalized the ideas of the E-MMP for projective varieties to the transcendental case, and constructed the E-MMP for terminal threefolds. 
In the higher dimensional case however, due to the lack of the existence of an MMP, the E-MMP hasn't been established in its full version so far.  
Recall that a holomorphic self-map $f:X\to X$ on a compact K\"ahler space $X$ with only rational singularities is said to be \textit{int-amplified}, if the induced linear operation $f^*|_{\textup{H}^{1,1}_{\textup{BC}}(X)}$ on the Bott-Chern cohomology space $\textup{H}^{1,1}_{\textup{BC}}(X)$ (see~\cite[Definition 3.1]{HP16}) has all the eigenvalues being of modulus greater than 1, or equivalently, $f^*[\omega]-[\omega]=[\eta]$ for some K\"ahler classes $[\omega]$ and $[\eta]$ (cf.~\cite[Theorem 1.1]{Zho21}). 
We refer the reader to \cite[Section 2]{Zho21} for notations and terminologies involved. 

Based on this generalized notion, it is natural for us to ask the following question, as an analytic version of Theorem \ref{thm-proj-Q-torus}.
\begin{question}\label{ques-q-torus}
Let $X$ be a normal compact K\"ahler space with at worst rational singularities.
Suppose that $X$ admits an int-amplified endomorphism and the canonical divisor $K_X$ is pseudo-effective.
Is $X$  a $Q$-complex torus?	
\end{question}
We refer the reader to \cite[Theorem 1.4]{Zho21} (cf.~\cite[Section 6]{Zho21}) for a positive answer to Question \ref{ques-q-torus} when $X$ satisfies one of the following: (1) $X$ is smooth; (2) $\dim X\le 2$;  or (3) $X$ is a terminal threefold. 

In this short note, we study Question \ref{ques-q-torus} and    characterize the $Q$-complex tori dynamically in a more general setting. 

\begin{remark}
\begin{enumerate}
    \item After the previous paper \cite{Zho21} being accepted, the  characterization of $Q$-complex tori has been greatly extended from threefolds \cite{GK20} to higher dimensions \cite{CGG22, CGG23}. 
Hence, together with our dynamical viewpoints, one of the main goals of this paper is to prove a higher dimensional analogue of \cite[Theorem 1.4]{Zho21}.
\item In the papers \cite{Zho21} and \cite{Zho22}, the notion ``$Q$-torus'' therein should be understood as ``$Q$-complex torus'' (cf.~\cite[Definition 3.6]{Nak99}).
\end{enumerate}
\end{remark}

Theorems \ref{thm-Q-torus} \(\sim\) \ref{simply-connected} below are our main results.

\begin{theorem}\label{thm-Q-torus}
Let $X$ be an \(n\)-dimensional compact K\"ahler klt space whose canonical divisor \(K_X\) is pseudo-effective. 
Suppose that $X$ admits an int-amplified endomorphism $f$. 
Then $X$ is a $Q$-complex torus. 
In particular, Question \ref{ques-q-torus} has a positive answer if \(X\) has only klt singularities.
\end{theorem}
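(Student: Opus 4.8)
The plan is to verify, with the help of the endomorphism $f$, the numerical characterisation of $Q$-complex tori for klt compact K\"ahler spaces furnished by \cite{CGG23} (cf.~\cite{GK20, CGG22}), which is the transcendental analogue of \cite[Theorem 1.21]{GKP16} used in Theorem \ref{thm-proj-Q-torus}: a klt compact K\"ahler space $X$ with $K_X\sim_{\mathbb{Q}}0$ and $\widehat{c}_2(X)\cdot[\omega]^{n-2}=0$ for some K\"ahler class $[\omega]$ is a $Q$-complex torus. So the two statements to establish are (a) $K_X\equiv 0$, in fact $K_X\sim_{\mathbb{Q}}0$, and the ramification divisor $R_f$ of $f$ vanishes (so that $c_1(X)=0$ and $f$ is quasi-\'etale); and (b) $\widehat{c}_2(X)\cdot[\omega]^{n-2}=0$. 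This is exactly the scheme behind Theorem \ref{thm-proj-Q-torus} in the projective case, and behind \cite[\S 6]{Zho21} in the analytic case: the present improvement of \cite[Theorem 1.4]{Zho21} is obtained simply by feeding (a) and (b) into the now-available higher-dimensional uniformisation result of \cite{CGG23} in place of the low-dimensional one.

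For step (a), write $K_X=f^*K_X+R_f$ with $R_f\geq 0$, so that $(\operatorname{id}-f^*)[K_X]=[R_f]$ in $\textup{H}^{1,1}_{\textup{BC}}(X)$; iterating gives $[K_X]=(f^m)^*[K_X]+\sum_{i=0}^{m-1}(f^i)^*[R_f]$ for every $m$. Since $f^*$ preserves the pseudo-effective cone, all the terms on the right are pseudo-effective, so the increasing sequence $\sum_{i=0}^{m-1}(f^i)^*[R_f]$ is bounded above by the fixed pseudo-effective class $[K_X]$; as the pseudo-effective cone is pointed and finite-dimensional, this sequence converges, and because every eigenvalue of $f^*$ has modulus $>1$ this forces $[R_f]=0$, hence $R_f=0$ and $f$ is quasi-\'etale. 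Then $f^*[K_X]=[K_X]$, and the invertibility of $f^*-\operatorname{id}$ on $\textup{H}^{1,1}_{\textup{BC}}(X)$ yields $[K_X]=0$, i.e.\ $K_X\equiv 0$; the refinement $K_X\sim_{\mathbb{Q}}0$ follows from abundance for klt $K$-trivial K\"ahler spaces (or by the $f$-equivariant argument of \cite[\S 6]{Zho21}).

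For step (b), use that $f$ is quasi-\'etale (from (a)), so that $f^*\widehat{c}_2(X)=\widehat{c}_2(X)$ for the $\mathbb{Q}$-Chern class of $X$. Pairing with $\big((f^m)^*[\omega]\big)^{n-2}$ and applying the projection formula gives $\int_X\widehat{c}_2(X)\cdot[\omega]^{n-2}=(\deg f)^{-m}\int_X\widehat{c}_2(X)\cdot\big((f^m)^*[\omega]\big)^{n-2}$; combining this with the Miyaoka-type semipositivity $\widehat{c}_2(X)\cdot\alpha_1\cdots\alpha_{n-2}\geq 0$ for nef classes $\alpha_i$ available for klt $K$-trivial $X$, and with the expanding behaviour of $(f^m)^*$ — every dynamical degree satisfies $d_k(f)\geq(\deg f)^{k/n}>1$ by log-concavity — forces $\int_X\widehat{c}_2(X)\cdot[\omega]^{n-2}=0$, as in the proofs behind \cite{Men20, Zho21}. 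I expect this last point to be the main obstacle: one must fix an $f^*$-invariant $\mathbb{Q}$-Chern class and identify it with the invariant entering the uniformisation theorem of \cite{CGG23}, and the dynamical estimate must be arranged to rule out the borderline case where the dominant eigenvalue of $f^*$ exactly balances $\deg f$ — here the equivariant MMP/structure results of \cite{Zho21} (cf.~\cite{MZ18, Men20}) are what make it go through. (Alternatively, once $K_X\sim_{\mathbb{Q}}0$ and $f$ is quasi-\'etale one could pass to the Beauville--Bogomolov decomposition of a quasi-\'etale cover $\widehat{X}\to X$ from \cite{CGG23}, lift an iterate of $f$, and rule out the strict Calabi--Yau and irreducible holomorphic symplectic factors by their finite quasi-\'etale fundamental groups, forcing $\widehat{X}$ to be a torus.) With (a) and (b) in hand, \cite{CGG23} (cf.~\cite{GK20}) produces a torus $T$ and a finite morphism $T\to X$ \'etale in codimension one, i.e.\ $X$ is a $Q$-complex torus, and Question \ref{ques-q-torus} has a positive answer for klt $X$.
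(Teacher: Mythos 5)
Your overall architecture coincides with the paper's: reduce to the numerical criterion of \cite[Corollary 1.7]{CGG23} by establishing $c_1(X)=0$ and $\widetilde{c_2}(X)\cdot[\omega]^{n-2}=0$. Your step (a) is sound; the paper simply quotes \cite[Theorem 1.3]{Zho21} for the pseudo-effectivity of $-K_X$ and then invokes purity of the branch locus, so your ramification-divisor iteration is essentially a reproof of that cited result. The further claim $K_X\sim_{\mathbb{Q}}0$ is not needed: only $c_1(X)=0$ in $\textup{H}^2(X,\mathbb{R})$ enters the criterion, so you should not lean on abundance for K\"ahler klt spaces, which is not available in all dimensions.

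The genuine gap is in step (b), at exactly the point you flag. Two ingredients are missing. First, the dynamical estimate you quote, $d_k(f)\ge(\deg f)^{k/n}$, is a lower bound and points the wrong way: what is needed is the strict upper bound that every eigenvalue of $f^*$ on $\textup{N}^{n-2}_{\mathbb{C}}(X)$ has modulus strictly less than $\deg f$, so that $[x_m]:=((f^m)^*[\omega])^{n-2}/(\deg f)^m$ tends to $0$ weakly numerically; this is precisely Lemma \ref{lem-tends-zero} (= \cite[Lemma 3.4]{Zho21}) and is where int-amplifiedness, rather than mere surjectivity, is used to exclude your ``borderline case''. Second, even granted $[x_m]\equiv_w 0$, weak numerical convergence is only tested against products of $(1,1)$-classes, whereas $\widetilde{c_2}(X)$ is merely a functional on $\textup{H}^{2n-4}(X,\mathbb{R})$ and need not be such a product; so one cannot directly conclude $\widetilde{c_2}(X)\cdot[x_m]\to 0$. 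The paper closes this by a dual-cone sandwich: letting $C$ be the closed convex cone generated by products of $n-2$ K\"ahler classes, one has $a[\omega]^2-\widetilde{c_2}(X)$ in the interior of $C^\vee$ for $a\gg 1$, whence $0\le\widetilde{c_2}(X)\cdot[\omega]^{n-2}=\widetilde{c_2}(X)\cdot[x_m]\le a[\omega]^2\cdot[x_m]\to 0$, the lower bound being the singular Miyaoka--Yau inequality \cite[Theorem 1.6]{CGG23} and the middle equality being \cite[Proposition 5.6]{GK20} for the quasi-\'etale map $f$. Your alternative route via the Beauville--Bogomolov decomposition of a quasi-\'etale cover is plausible but likewise not carried out (one must make the cover $f$-equivariant and rule out the Calabi--Yau and symplectic factors), and it is not the route the paper takes.
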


There are  different kinds of applications of Theorem \ref{thm-Q-torus}.
On the one hand,  we can characterize the end product of the  E-MMP   for compact K\"ahler klt spaces in arbitrary dimension (as a quasi-\'etale quotient of a complex torus), if the MMP exists. 
On the other hand, since the existence of the MMP has been nicely confirmed for K\"ahler klt threefolds (\cite{DH22} and \cite{DO22}; cf.~\cite{HP16}, \cite{HP15}, \cite{CHP16} and \cite{DO23}),  we can reformulate the E-MMP for klt  threefolds as below, extending the previous E-MMP for terminal threefolds.

\begin{theorem}[{cf.~\cite[Theorem 1.5]{Zho21}}]\label{E-MMP-threefolds}
Let $f:X\to X$ be an int-amplified endomorphism 	of a compact K\"ahler klt threefold. 
Then, after replacing $f$ by a power, there exists an $f$-equivariant minimal model program
$$X=X_1\dashrightarrow\cdots\dashrightarrow X_i\dashrightarrow\cdots\dashrightarrow X_r=Y$$
(i.e., $f=f_1$ descends to each $f_i$ on $X_i$) with each $X_i\dashrightarrow X_{i+1}$ a divisorial contraction, a flip or a Fano contraction, of a $K_{X_i}$-negative extremal ray. 
Moreover, 
\begin{enumerate}
\item If $K_X$ is pseudo-effective, then $X=Y$ is a $Q$-complex torus.
\item If $K_X$ is not pseudo-effective, then for each $i$, $f_i$ is int-amplified and the composite $X_i\to Y$ is equidimensional and holomorphic with every fibre being reduced and irreducible.
Furthermore, every fibre of $X_i\to Y$ is rationally connected; and the last step $X_{r-1}\to X_r$ is a Fano contraction.
\end{enumerate}
\end{theorem}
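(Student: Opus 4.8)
The plan is to run the $f$-equivariant MMP one step at a time, following the template of Meng--Zhang \cite{MZ18, Men20} in the projective case and of \cite[Theorem 1.5]{Zho21} in the terminal K\"ahler case. The new ingredient that makes the klt case go through is the existence and termination of the MMP for compact K\"ahler klt threefolds established in \cite{DH22} and \cite{DO22} (cf.~\cite{HP16, HP15, CHP16, DO23}), which we substitute for the terminal MMP, together with Theorem \ref{thm-Q-torus} in place of \cite[Theorem 1.4]{Zho21}. The engine of the argument is a descent mechanism for int-amplified endomorphisms: by the cone and contraction theorem for compact K\"ahler klt threefolds the $K_X$-negative extremal rays are locally discrete, and since $f$ is int-amplified the induced operator on $\textup{H}^{1,1}_{\textup{BC}}(X)$ has all eigenvalues of modulus $>1$; hence $f$ permutes the $K_X$-negative extremal rays, and arguing as in \cite{Zho21} (cf.~\cite{Men20}), after replacing $f$ by a suitable power we may assume that $f$ fixes the particular $K_X$-negative extremal ray whose contraction we wish to perform. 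Then $f$ descends to the associated divisorial contraction, flip or Fano contraction $X_1\dashrightarrow X_2$; one checks, as in \cite{Zho21}, that $X_2$ is again a compact K\"ahler klt threefold, that the induced $f_2$ is an endomorphism, and that $f_2$ is again int-amplified --- the eigenvalue condition on $\textup{H}^{1,1}_{\textup{BC}}$ is inherited because $f^*$ and $f_2^*$ are compatible under pullback of K\"ahler classes together with the projection formula. Iterating and invoking termination, the program stops at some $X_r=Y$.

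\emph{Part (1).} If $K_X$ is pseudo-effective, then Theorem \ref{thm-Q-torus} applies and $X$ is already a $Q$-complex torus; pulling back $K_X$ to the quasi-\'etale cover by a complex torus shows $K_X$ is numerically trivial, in particular nef, so there is no $K_X$-negative extremal ray and the MMP is trivial. Thus $X=X_1=\cdots=X_r=Y$ is a $Q$-complex torus.

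\emph{Part (2).} If $K_X$ is not pseudo-effective, the MMP cannot terminate with a minimal model, so its last step $X_{r-1}\to X_r=Y$ is a Fano contraction, and by the descent mechanism above each $f_i$ on $X_i$ is int-amplified. To describe the composite $\pi_i\colon X_i\to Y$, one restricts $f_i$ to the fibres of $\pi_i$ and runs the structure-theory arguments of \cite{MZ18, Men20} in the K\"ahler setting as in \cite{Zho21}: the int-amplified hypothesis forces $\pi_i$ to be an equidimensional morphism whose fibres are reduced and irreducible, because along an $f_i$-periodic fibre the multiplicity and the number of irreducible components are preserved by $f_i$, whereas an int-amplified map would scale them by factors $\ge 2$ unless they equal $1$. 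Finally, the general fibre of the Fano contraction $X_{r-1}\to Y$ is a klt log Fano variety of dimension at most two, hence rationally connected; since $\pi_i$ is equidimensional with irreducible fibres and rational connectedness propagates in such families (alternatively, after replacing $f_i$ by a power, each fibre of $\pi_i$ receives a finite surjection from a rationally connected variety obtained by restricting $f_i$ to a suitable fibre), every fibre of $\pi_i$ is rationally connected.

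The main obstacle is the K\"ahler-analytic bookkeeping hidden in the descent mechanism: with no ample divisors available to polarize the contractions, one must carry K\"ahler classes through each divisorial contraction and, more delicately, through each flip, checking at every step that K\"ahlerness and the klt property are preserved --- this is exactly where \cite{DH22} and \cite{DO22} enter --- and that the spectral characterization of int-amplified endomorphisms on $\textup{H}^{1,1}_{\textup{BC}}$ descends along the step. The second delicate point is establishing the equidimensionality and the reduced-and-irreducible fibre structure of the Fano contraction purely in terms of K\"ahler classes, where the intersection-theoretic computations against ample divisors of \cite{MZ18, Men20} must be replaced by arguments using the K\"ahler cone and the contraction theorem for compact K\"ahler klt threefolds.
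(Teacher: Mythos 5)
Your overall architecture is the same as the paper's: run the MMP for compact K\"ahler klt threefolds from \cite{DH22}, \cite{DO22} step by step, descend $f$ (after iteration) through each divisorial contraction, flip and Fano contraction as in \cite{MZ18}, \cite{Men20}, \cite{Zho21}, use Theorem \ref{thm-Q-torus} for the pseudo-effective case, and quote the fibration structure results for the non-pseudo-effective case. The paper's proof is itself a reduction to \cite[Theorem 1.5]{Zho21} with the terminal-case inputs replaced by Lemmas \ref{lemequivariant}, \ref{flipdescendingholomorphic} and Propositions \ref{prodivisorial}, \ref{proflipdescending}.

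However, the one place where you supply an actual argument rather than a citation --- the ``descent mechanism'' --- is wrong as stated, and it is precisely the point where the paper does new work. You claim that because the $K_X$-negative extremal rays are locally discrete and $f^*|_{\textup{H}^{1,1}_{\textup{BC}}(X)}$ has eigenvalues of modulus $>1$, the map $f$ permutes these rays and a power of $f$ fixes the ray to be contracted. Local discreteness does not give finiteness of the set of extremal rays, so an $f$-orbit of a ray need not be finite and passing to a power does not by itself fix a chosen ray; moreover the eigenvalue condition is not what makes $f$ act on extremal rays (that action, $f^{-1}(R_\Gamma)=R_{\Gamma'}$, comes from \cite[Lemma 8.1]{Zho21} and holds for any surjective endomorphism). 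The paper's Lemma \ref{lemequivariant} supplies the missing finiteness: one first arranges a totally invariant analytic subvariety $E$ with $f^{-1}(E)=E$ and $\dim\pi(E)<\dim E$ (for a Fano contraction one may take $E=X$; for birational steps this uses that the relevant exceptional loci become totally invariant after iteration), restricts $\textup{L}^1_{\mathbb{C}}(X)$ to $E$, and shows that $H=(\pi^*\textup{L}^1_{\mathbb{C}}(Y))|_E$ is an irreducible component of the hypersurface $S=\{\alpha|_E : (\alpha|_E)^{\dim E}=0\}$, which is $(f|_E)^*$-stable and has only finitely many components; a power of $f$ therefore fixes $H$, and this is what pins down the ray. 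Without this (or an equivalent substitute), your induction does not get off the ground. The remaining heuristics in Part (2) (multiplicities and component counts of periodic fibres being ``scaled'' unless equal to $1$, and the propagation of rational connectedness to every fibre) are in the right spirit but likewise need the detailed arguments of \cite[Section 8]{Men20} and \cite[Section 8]{Zho21} rather than the one-line justifications you give.
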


Finally, running the E-MMP for a compact K\"ahler (smooth) threefold $X$, we obtain the following theorem,  which is a transcendental version of \cite[Corollary 1.4]{Yos21} in low dimension.
In this case when  $X$ is simply connected, $X$ admitting an int-amplified endomorphism forces it to be projective.

\begin{corollary}[{cf.~\cite[Corollary 1.4]{Yos21}}]\label{simply-connected}
Let $X$ be a  simply connected compact K\"ahler (smooth) threefold admitting an int-amplified endomorphism. 
Then $X$ is a (projective) rationally connected threefold. 
In particular, it is of Fano type, i.e., there exists an effective Weil $\mathbb{Q}$-divisor $\Delta$ on $X$ such that $(X,\Delta)$ is klt and $-(K_X+\Delta)$ is ample.	
\end{corollary}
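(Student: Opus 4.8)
The plan is to run the $f$-equivariant minimal model program of Theorem \ref{E-MMP-threefolds}, feed its output into Theorem \ref{thm-Q-torus}, and use the smoothness and simple connectivity of $X$ to destroy the pseudo-effective possibility. After replacing $f$ by a power, Theorem \ref{E-MMP-threefolds} provides an $f$-equivariant MMP $X=X_1\dashrightarrow\cdots\dashrightarrow X_r=:Y$. Suppose first that $K_X$ is pseudo-effective. Then by part (1), $X=Y$ is a $Q$-complex torus, so there is a finite morphism $\tau\colon T\to X$ from a complex $3$-torus which is \'etale in codimension one. As $X$ is smooth, purity of the branch locus makes $\tau$ \'etale, and as $X$ is simply connected with $T$ connected, $\tau$ is an isomorphism; but then $\pi_1(X)=\pi_1(T)\cong\mathbb{Z}^{6}\neq 1$, a contradiction. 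Hence $K_X$ is not pseudo-effective.

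By part (2) of Theorem \ref{E-MMP-threefolds}, the composite $\pi\colon X=X_1\to Y$ is then holomorphic and equidimensional, every fibre is reduced, irreducible and rationally connected, $f$ descends to an int-amplified endomorphism of $Y$, and $\dim Y<3$ since the last step is a Fano contraction. As the fibres of $\pi$ are connected, $\pi_{*}\colon\pi_1(X)\to\pi_1(Y)$ is surjective, so $Y$ is simply connected. If $\dim Y=0$, then $Y$ is a point; if $\dim Y=1$, then $Y$ is a simply connected smooth compact curve, hence $Y\cong\mathbb{P}^{1}$.

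The main case is $\dim Y=2$, where I claim $Y$ is rational. If $K_Y$ were pseudo-effective, then Theorem \ref{thm-Q-torus} applied to the klt surface $Y$ (which carries an int-amplified endomorphism) would make $Y$ a $Q$-complex torus; choose a finite quasi-\'etale $\tau\colon T\to Y$ with $T$ a complex $2$-torus, let $W$ be the normalisation of $X\times_Y T$, and consider the finite projection $p\colon W\to X$. The branch locus of $\tau$ lies in $\operatorname{Sing}(Y)$, hence has codimension $2$ in $Y$; since $\pi$ is equidimensional with one-dimensional fibres, its preimage in $X$ has codimension $2$, so $p$ is \'etale in codimension one. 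As $X$ is smooth, purity makes $p$ \'etale, hence --- $X$ being simply connected --- a trivial cover; since $W\to T$ is surjective, one copy $W_0\cong X$ dominates $T$, giving a non-constant morphism $X\to T$, which is impossible because $\operatorname{Alb}(X)=0$. So $K_Y$ is not pseudo-effective, i.e.\ $Y$ is uniruled, and since $q(Y)=h^{1}(Y,\mathcal{O}_Y)=0$ by simple connectivity, $Y$ is a rational surface.

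In all cases $Y$ is rationally connected --- a point, $\mathbb{P}^{1}$, or a rational surface --- and $\pi\colon X\to Y$ has rationally connected fibres, so the maximal rationally connected fibration $X\dashrightarrow Z$ must have $Z$ a point, i.e.\ $X$ is rationally connected. A rationally connected compact K\"ahler manifold has $H^{2,0}(X)=0$, so its K\"ahler cone is an open subset of $H^{2}(X,\mathbb{R})$ and contains a rational class; by Kodaira's embedding theorem $X$ is projective, and a smooth projective rationally connected variety carrying an int-amplified endomorphism is of Fano type by the structure theory of varieties admitting int-amplified endomorphisms. The hard part will be the case $\dim Y=2$: a simply connected compact K\"ahler surface can indeed be a singular $Q$-complex torus --- a singular Kummer surface is simply connected --- so excluding this base genuinely uses the smoothness of $X$, exploited through purity of the branch locus on $X\times_Y T$, together with $\operatorname{Alb}(X)=0$; the equidimensionality and irreducibility of the fibres of $\pi$ furnished by Theorem \ref{E-MMP-threefolds}(2) are exactly what make the codimension bookkeeping go through.
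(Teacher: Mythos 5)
Your proposal is correct and follows essentially the same route as the paper: rule out pseudo-effective $K_X$ via Theorem \ref{thm-Q-torus} together with purity of the branch locus and simple connectivity, and kill the potential $Q$-complex-torus surface base by pulling back the torus cover, normalizing, and applying purity on the smooth, simply connected $X$ (the paper contradicts $q(S)\le q(X)=0$ where you use $\operatorname{Alb}(X)=0$, which is the same point). The one genuine difference is ordering: the paper establishes projectivity of $X$ \emph{before} invoking \cite{GHS03}, whose statement is for projective varieties, whereas you run the MRC/Graber--Harris--Starr step first and deduce projectivity from rational connectedness afterwards --- this is harmless but needs a word of justification in the K\"ahler category, which is exactly why the paper first shows that a non-projective $X$ would force the surface base to be non-uniruled and hence a $Q$-complex torus.
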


\subsubsection*{\textbf{\textup{Acknowledgments}}}
The author would like to thank Professor De-Qi Zhang for many valuable discussions. 
The author would also like to thank the referees for the very careful reading, pointing out the references, and many constructive suggestions to improve the paper.  
The author was partially supported by a Graduate Scholarship from the department of Mathematics in NUS and was partially supported  by the Institute for Basic Science (IBS-R032-D1-2023-a00).

\section{Proofs of the statements}
\subsection{Characterization of Q-complex tori}
In this subsection, we prove Theorem \ref{thm-Q-torus}. 
We refer the reader to \cite[Section 5]{GK20} for the definition of the first Chern class and the second Chern class in the singular K\"ahler setting.
For the convenience of readers and us, we briefly recall them here.

Let $X$ be a normal complex space of dimension $n$ such that the canonical divisor $K_X$ is $\mathbb{Q}$-Cartier, i.e., there exists some integer $m>0$ such that the reflexive tensor power $\omega_X^{[m]}:=(\omega_X^{\otimes m})^{\vee\vee}=\mathcal{O}_X(mK_X)$ is locally free, where $\omega_X$ is the dualizing sheaf.
The \textit{first Chern class} $c_1(X)$ of $X$ is then defined as
$$c_1(X):=-\frac{1}{m}\mathcal{O}_X(mK_X)\in \textup{H}^2(X,\mathbb{R}).$$
In the following, we further assume that $X$ has only klt singularities.
Let $X^{\circ}\subseteq X$ be the open locus of quotient singularities of $X$; then the \textit{second orbifold Chern class} of $X$ is defined as the unique element in the Poincar\'e dual space $\widetilde{c_2}(X)\in \textup{H}^{2n-4}(X,\mathbb{R})^{\vee}$ whose restriction to $\textup{H}^{2n-4}(X^\circ,\mathbb{R})^\vee$ is the usual second orbifold Chern class $\widetilde{c_2}(X^\circ)\in \textup{H}^4(X^\circ,\mathbb{R})$ (\cite[Definition 5.2]{GK20}).

Before we prove  Theorem \ref{thm-Q-torus}, let us recall the weak numerical equivalence defined in \cite[Section 3]{Zho21}.
Let $f:X\rightarrow X$ be a  surjective endomorphism of a normal compact K\"ahler space $X$ of dimension $n$ with at worst rational singularities. Denote by \setlength\parskip{0pt}
$$\textup{L}^k(X):=\{\sum [\alpha_1]\cup\cdots\cup[\alpha_k]~|~[\alpha_i]\in \textup{H}_{\text{BC}}^{1,1}(X)\},$$
the subspace of $\textup{H}^{2k}(X,\mathbb{R})$. 
Here, $\textup{H}^{1,1}_{\textup{BC}}(X)$ denotes the Bott-Chern cohomology space (see \cite[Definition 3.1]{HP16}).
Let $\textup{N}^k(X):=\textup{L}^k(X)/\equiv_w$, where a cycle $[\alpha]\in \textup{L}^k(X)$ is weakly numerically equivalent (denoted by $\equiv_w$) to zero if and only if for any $[\beta_{k+1}],\cdots,[\beta_n]\in \textup{H}_{\text{BC}}^{1,1}(X)$, $[\alpha]\cup[\beta_{k+1}]\cup\cdots\cup [\beta_n]=0$. 
Moreover, for any $[\alpha],[\beta]\in \textup{L}^k(X)$, $[\alpha]\equiv_w [\beta]$ if and only if $[\alpha]-[\beta]\equiv_w 0$.
We note that, when $X$ has only rational singularities, there is a natural injection $\textup{H}^{1,1}_{\textup{BC}}(X)\hookrightarrow\textup{H}^2(X,\mathbb{R})$ (cf.~\cite[Remark 3.7]{HP16}); hence the above intersection product makes sense.

Let  $\textup{L}_{\mathbb{C}}^k(X)=\textup{L}^k(X)\otimes_{\mathbb{R}}\mathbb{C}$ and $\textup{N}_{\mathbb{C}}^k(X)=\textup{N}^k(X)\otimes_{\mathbb{R}}\mathbb{C}$. As the pull-back operation $f^*$ on Bott-Chern cohomology space induces a linear operation on  the subspace $\textup{L}^k_{\mathbb{C}}(X)\subseteq H^{2k}(X,\mathbb{C})$ for each $k$, it follows from \cite[Proposition 2.8]{Zho21} that $f^*$ also gives a well-defined linear operation on the quotient space $\textup{N}_{\mathbb{C}}^k(X)$.  
The following lemma was proved in \cite[Lemma 3.4]{Zho21} and we recall it here for our later proof.
\begin{lemma}[{\cite[Lemma 3.4]{Zho21}}]\label{lem-tends-zero}
Let $f:X\to X$ be an int-amplified endomorphism of a normal compact K\"ahler space of dimension $n$.
Suppose that $X$ has at worst rational singularities.
Then, for each $0<k<n$,  all the eigenvalues of $f^*|_{\textup{N}^k_{\mathbb{C}}(X)}$ are of modulus less than $\deg f$. In particular, $\lim\limits_{i\rightarrow+\infty}\frac{(f^i)^*[x]}{(\deg f)^i}\equiv_w 0$ for any $[x]\in \textup{L}^k_{\mathbb{C}}(X)$. 
\end{lemma}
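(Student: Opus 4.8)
The plan is to transpose to the Bott--Chern setting the linear-algebra argument behind the projective statement (Meng--Zhang \cite{MZ18, Men20}), the crux being a cup-product duality between $\textup{N}^k(X)$ and $\textup{N}^{n-k}(X)$. First I would check that $f^*$ descends to an \emph{invertible} operator on every $\textup{N}^j_{\mathbb{C}}(X)$, equal to multiplication by $\deg f$ when $j=n$. Indeed, since $f$ is int-amplified, $f^*|_{\textup{H}^{1,1}_{\textup{BC}}(X)}$ has all eigenvalues of modulus $>1$, in particular it is bijective; being a ring homomorphism compatible with cup product and with the inclusion $\textup{H}^{1,1}_{\textup{BC}}(X)\hookrightarrow\textup{H}^2(X,\mathbb{R})$, $f^*$ then maps the finite-dimensional subspace $\textup{L}^j(X)$ onto itself, hence restricts to a bijection of $\textup{L}^j_{\mathbb{C}}(X)$. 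The same surjectivity shows $f^*$ preserves the weak-numerical subspace (if $[\alpha]\cup[\gamma]=0$ for all $[\gamma]\in\textup{L}^{n-j}(X)$, writing $[\gamma]=f^*[\delta]$ gives $f^*[\alpha]\cup[\gamma]=f^*([\alpha]\cup[\delta])=0$), so, together with \cite[Proposition 2.8]{Zho21}, $f^*$ descends to a bijection of $\textup{N}^j_{\mathbb{C}}(X)$. Finally a Kähler class has $[\omega]^n\neq0$, so $\textup{L}^n(X)=\textup{H}^{2n}(X,\mathbb{R})\cong\mathbb{R}$, $\textup{N}^n(X)\cong\mathbb{R}$, and $f^*|_{\textup{N}^n_{\mathbb{C}}(X)}$ is multiplication by $\deg f$.

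Next I would record that all eigenvalues of $f^*|_{\textup{N}^j_{\mathbb{C}}(X)}$ have modulus $>1$ for every $1\le j\le n$. Since $\textup{N}^1(X)$ is the quotient of $\textup{H}^{1,1}_{\textup{BC}}(X)$ by an $f^*$-invariant subspace, $f^*|_{\textup{N}^1_{\mathbb{C}}(X)}$ inherits the eigenvalue bound; and the cup product induces a surjective $f^*$-equivariant map $\textup{N}^1(X)^{\otimes j}\twoheadrightarrow\textup{N}^j(X)$ — it factors through the quotient because $[\alpha_1]\equiv_w 0$ in $\textup{H}^{1,1}_{\textup{BC}}(X)$ forces $[\alpha_1]\cup[\alpha_2]\cup\cdots\cup[\alpha_j]\equiv_w 0$ in $\textup{L}^j(X)$ — so each eigenvalue of $f^*|_{\textup{N}^j_{\mathbb{C}}(X)}$ is a product of $j$ eigenvalues of $f^*|_{\textup{N}^1_{\mathbb{C}}(X)}$, hence has modulus $>1$.

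Now for the duality. For $0<k<n$ the cup product gives a well-defined pairing $\textup{N}^k(X)\times\textup{N}^{n-k}(X)\to\textup{N}^n(X)\cong\mathbb{R}$, and, because the defining condition of $\equiv_w$ is symmetric under $k\leftrightarrow n-k$, each of the two spaces injects into the dual of the other; comparing dimensions, the pairing is perfect. It satisfies $\langle f^*v,f^*w\rangle=(\deg f)\,\langle v,w\rangle$, so, writing $A=f^*|_{\textup{N}^k_{\mathbb{C}}}$ and $B=f^*|_{\textup{N}^{n-k}_{\mathbb{C}}}$ (both invertible by the first paragraph) and letting $A^{\vee}$ be the transpose of $A$ with respect to the pairing, we get $A^{\vee}B=(\deg f)\cdot\textup{id}$, whence $\operatorname{Spec}(B)=\{\deg f/\lambda:\lambda\in\operatorname{Spec}(A)\}$. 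Consequently, if $\lambda$ is an eigenvalue of $f^*|_{\textup{N}^k_{\mathbb{C}}(X)}$ with $0<k<n$, then $\deg f/\lambda$ is an eigenvalue of $f^*|_{\textup{N}^{n-k}_{\mathbb{C}}(X)}$, and since $1\le n-k\le n-1$ the second paragraph gives $|\deg f/\lambda|>1$, i.e. $|\lambda|<\deg f$. This says the spectral radius of $\tfrac{1}{\deg f}\,f^*|_{\textup{N}^k_{\mathbb{C}}(X)}$ is $<1$, so its $i$-th power tends to $0$ on the finite-dimensional space $\textup{N}^k_{\mathbb{C}}(X)$; applied to the class of $[x]$ this yields $\lim_{i\to+\infty}\frac{(f^i)^*[x]}{(\deg f)^i}\equiv_w 0$.

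The argument is essentially formal once the Bott--Chern machinery of \cite{HP16, Zho21} is available. The part I expect to require the most care is verifying that cup products of Bott--Chern $(1,1)$-classes sit inside $\textup{H}^{2\bullet}(X,\mathbb{R})$ compatibly with $f^*$ and with the degree $\deg f$, so that $f^*$ genuinely descends to an \emph{invertible} operator on each $\textup{N}^j(X)$ and the weak-numerical pairing between $\textup{N}^k(X)$ and $\textup{N}^{n-k}(X)$ is \emph{perfect} and $f^*$-equivariant; this is exactly where the hypothesis that $X$ has only rational singularities (so that $\textup{H}^{1,1}_{\textup{BC}}(X)\hookrightarrow\textup{H}^2(X,\mathbb{R})$) is used.
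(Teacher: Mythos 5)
Your argument is correct and follows essentially the same route as the proof of \cite[Lemma 3.4]{Zho21} (and its projective antecedents in \cite{MZ18, Men20}) that this paper invokes: first all eigenvalues of $f^*|_{\textup{N}^j_{\mathbb{C}}(X)}$ have modulus $>1$ via the $f^*$-equivariant surjection $\textup{N}^1(X)^{\otimes j}\twoheadrightarrow \textup{N}^j(X)$, and then the perfect pairing $\textup{N}^k(X)\times\textup{N}^{n-k}(X)\to\textup{N}^n(X)\cong\mathbb{R}$ satisfying $\langle f^*v,f^*w\rangle=(\deg f)\langle v,w\rangle$ converts the lower bound on $\textup{N}^{n-k}$ into the upper bound $|\lambda|<\deg f$ on $\textup{N}^k$. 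Note that the present paper does not reprove this lemma but merely quotes it, so the only comparison available is with the cited source, with which your proposal agrees.
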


\begin{proof}[Proof of Theorem \ref{thm-Q-torus}]
To prove Theorem \ref{thm-Q-torus}, we resort to \cite[Corollary 1.7]{CGG23} by showing that $c_1(X)=0\in \textup{H}^2(X,\mathbb{R})$ and $\widetilde{c_2}(X)\cdot[\omega]^{n-2}=0$ for some K\"ahler class $[\omega]\in \textup{H}^2(X,\mathbb{R})$. 
Here, $c_1(X)$ is the first Chern class and $\widetilde{c_2}(X)$ is the  second orbifold  Chern class of $X$ as defined above. 
Since $-K_X$ is pseudo-effective by \cite[Theorem 1.3]{Zho21} and $K_X$ is pseudo-effective by assumption, the vanishing of the first Chern class  $c_1(X)=0$ follows. 
Then $K_X=f^*K_X$ and it follows from the purity of  branch loci (cf.~\cite{GR55}) that  
our $f$ is \'etale in codimension one. 
Let $d:=\deg f>1$. 
Fix a K\"ahler class $[\omega]$ on $X$. 
Since $f$ is \'etale in codimension one, by \cite[Proposition 5.6]{GK20},
we get $\widetilde{c_2}(X)\cdot (f^*[\omega])^{n-2}=d\cdot \widetilde{c_2}(X)\cdot [\omega]^{n-2}$.
Then with the equality divided by $d$, we have
\begin{align}\label{eqa-def-xi}
\widetilde{c_2}(X)\cdot [\omega]^{n-2}=\widetilde{c_2}(X)\cdot \frac{f^*[\omega]^{n-2}}{d}=\widetilde{c_2}(X)\cdot\frac{(f^i)^*[\omega]^{n-2}}{d^i}=:\widetilde{c_2}(X)\cdot [x_i], 	
\end{align}
where $\lim\limits_{i\to\infty}[x_i]\equiv_w0$ by Lemma \ref{lem-tends-zero}.
Hence, 
$$(\lim\limits_{i\to\infty}[x_i])\cdot [\omega]^2=\lim_{i\to\infty}[x_i]\cdot[\omega]^2=0.$$
Let \(C\subseteq H^{2n-4}(X,\mathbb{R})\) be the closure of the convex cone  generated by \(\xi_1\cup\cdots\cup\xi_{n-2}\) where \(\xi_i\) are K\"ahler classes.
Let \(C^{\vee}\) be the dual cone of \(C\) in \(H^{2n-4}(X,\mathbb{R})^\vee\).
It is clear that \([\omega]^2\) lies in the interior of \(C^\vee\) and hence \(a[\omega]^2-\widetilde{c_2}(X)\) lies in the interior of \(C^\vee\) for sufficiently large \(a\gg 1\).
In particular, we have 
\[
0\le \widetilde{c_2}(X)\cdot [\omega]^{n-2}=\lim_{i\to\infty}(\widetilde{c_2}(X)-a[\omega]^2)\cdot [x_i]\le 0,
\]
where the first inequality is due to the singular Miyaoka-Yau inequality (see \cite[Theorem 1.6]{CGG23}), the second equality follows from Lemma \ref{lem-tends-zero} and the last inequality is by the construction of the dual cone.
Therefore, we conclude our proof by applying \cite[Corollary 1.7]{CGG23}.
\end{proof}

\subsection{E-MMP for threefolds: the klt case}\label{sec-E-MMP}
In this subsection, we construct the equivariant minimal model program (E-MMP) for int-amplified endomorphism on compact K\"ahler klt threefolds.
To prove Theorem \ref{E-MMP-threefolds}, we refer the reader to  \cite[Section 8]{Zho21} and \cite[Section 8]{Men20} for the technical proofs involved.

Let $X$ be an $n$-dimensional compact K\"ahler space with only rational singularities. 
Let $\textup{N}_1(X)$ be the vector space of real closed currents of bi-degree $(n-1,n-1)$ (or equivalently bi-dimension $(1,1)$) modulo the  equivalence: $T_1\equiv T_2$ if and only if $T_1(\eta)=T_2(\eta)$ for all real closed $(1,1)$-forms $\eta$ with local potentials (cf.~\cite[Definition 3.8]{HP16}).

The following lemma was first proved in \cite{Zha10} when $X$ is projective and reformulated in \cite{Zho21} for terminal threefolds.
Due to the organization of the published version \cite{Zho21}, we skipped its proof therein. 
In this note, we follow the idea of \cite{Zha10} to reprove it in our transcendental case here for the convenience of readers, noting that we also weaken the condition on the singularities herein.
\begin{lemma}[{cf.~\cite[Lemma 8.3]{Zho21}, \cite[Lemma 6.2]{MZ18}}]\label{lemequivariant}
Let $f$ be a surjective endomorphism of a compact K\"ahler klt threefold  $X$ and $\pi:X\rightarrow Y$ a contraction  of a $K_X$-negative extremal ray $R_\Gamma:=\mathbb{R}_{\ge 0}[\Gamma]$ generated by a positive closed $(2,2)$-current $\Gamma$. 
Suppose further that $E\subseteq X$ is an analytic subvariety such that $\dim(\pi(E))<\dim E$ and $f^{-1}(E)=E$. 
Then, up to replacing $f$ by its power, $f(R_{\Gamma})=R_{\Gamma}$ (hence, for any curve $C$ with $[C]\in R_{\Gamma}$, its image $[f(C)]$ still lies in $R_{\Gamma}$), i.e., the contraction $\pi$ is $f$-equivariant.
\end{lemma}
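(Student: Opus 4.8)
The plan is to follow the strategy of Zhang in \cite{Zha10}, adapted to the transcendental setting, exploiting that an extremal contraction of a $K_X$-negative ray is essentially unique and that $f$ induces a linear self-map on the space of currents $\textup{N}_1(X)$. First I would record the basic compatibility: since $f$ is finite and surjective, the pushforward $f_*$ (and, up to the degree factor, a natural pullback) acts linearly on $\textup{N}_1(X)$, preserving the subcone of classes of positive closed $(2,2)$-currents, and it sends $K_X$-negative classes to $K_X$-negative classes because $f^*K_X \equiv K_X$ up to positive scaling along the relevant rays (more precisely, one uses $f_*$ on curve classes together with the projection formula $K_X \cdot f_*C = f^*K_X \cdot C$). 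Consequently $f_*$ permutes the finitely many $K_X$-negative extremal rays that can be contracted; replacing $f$ by a power, we may assume $f_*$ fixes the ray $R_\Gamma$ \emph{as a set of rays}, but a priori it could swap $R_\Gamma$ with finitely many others, so one really argues that the set of $K_X$-negative contractible extremal rays is finite (cone theorem in the K\"ahler klt threefold setting, cf.~\cite{DH22,DO22,HP16}) and $f_*$ acts as a permutation on it, hence some power fixes each.

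The subtlety is that fixing the ray abstractly is not the same as $f(R_\Gamma) = R_\Gamma$ with the \emph{same} geometric curves behaving well; this is where the hypothesis on $E$ enters. I would use $E$ to pin down $R_\Gamma$ canonically: the contraction $\pi$ with $\dim \pi(E) < \dim E$ means $E$ is (a component of) the exceptional/contracted locus, and curves contracted by $\pi$ are exactly those whose classes lie in $R_\Gamma$; the condition $f^{-1}(E) = E$ forces $f$ to carry contracted curves inside $E$ to contracted curves inside $E$, so $f_*$ cannot move $R_\Gamma$ to a ray whose contracted locus is disjoint from $E$. Combined with the permutation argument, after passing to a power $f$ we get $f_*R_\Gamma = R_\Gamma$, i.e.\ for every curve $C$ with $[C]\in R_\Gamma$ we have $[f(C)] = \deg(f|_C)\,[C'] $ for some curve $C'$ also contracted, so $[f(C)]\in R_\Gamma$. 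Then the universal property of the contraction $\pi$ of the ray $R_\Gamma$ (it contracts precisely the curves with class in $R_\Gamma$, and $Y$ is normal) gives a factorization of $\pi \circ f$ through $\pi$, i.e.\ a morphism $g\colon Y \to Y$ with $g \circ \pi = \pi \circ f$; this is exactly the statement that $\pi$ is $f$-equivariant.

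The main obstacle I expect is the first half made rigorous in the K\"ahler category: establishing that the set of $K_X$-negative extremal rays admitting a contraction is finite and that $f_*$ genuinely permutes them, using only the currently available K\"ahler MMP for klt threefolds rather than the projective machinery of \cite{Zha10}. Concretely, one needs: (i) the cone/contraction theorem for compact K\"ahler klt threefolds (available by \cite{DH22,DO22}, building on \cite{HP16,CHP16}); (ii) that $f_*$ is an isomorphism on $\textup{N}_1(X)$ preserving the Mori cone $\overline{\textup{NA}}(X)$ and its $K_X$-negative extremal rays — this follows from $f$ being finite surjective plus $f^*K_X\equiv K_X$; and (iii) the rigidity input from $E$ to select the correct ray among a possibly-permuted orbit. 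Once (i)--(iii) are in place, the factorization step is formal: it is the standard rigidity lemma for contractions, valid verbatim in the analytic category since $\pi$ has connected fibers onto a normal base and $\pi\circ f$ is constant on the fibers of $\pi$ (because those fibers are covered by curves in $R_\Gamma$, whose $f$-images stay in $R_\Gamma$ and hence in fibers of $\pi$). I would also remark, as the excerpt hints, that weakening ``terminal'' to ``klt'' costs nothing here: the only place singularities matter is the validity of the cone theorem and the well-definedness of $\textup{N}_1(X)$ and $K_X$-negativity, all of which hold for compact K\"ahler klt threefolds.
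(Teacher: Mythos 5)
Your overall skeleton (make $f_*$ fix the ray, then conclude by rigidity) matches the paper's, and your final rigidity step is fine, but the core of your argument --- that $f_*$ permutes a \emph{finite} set of $K_X$-negative extremal rays and that the hypothesis on $E$ then pins down $R_\Gamma$ --- has genuine gaps and is not how the paper (following \cite{Zha10} and \cite[Lemma 6.2]{MZ18}) proceeds. First, the cone theorem for klt threefolds only gives that the $K_X$-negative extremal rays are discrete away from the hyperplane $K_X^{\perp}$; there may be countably infinitely many of them, so there is no finite set for $f_*$ to permute and no a priori reason the $f_*$-orbit of $R_\Gamma$ is finite. Second, $f_*$ need not preserve $K_X$-negativity at all: the lemma assumes only that $f$ is a surjective endomorphism, so $f^*K_X = K_X + R_f$ with $R_f$ effective, and $K_X\cdot f_*C = (K_X+R_f)\cdot C$ can be nonnegative even when $K_X\cdot C<0$; your parenthetical ``$f^*K_X\equiv K_X$ up to positive scaling along the relevant rays'' is unjustified. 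Third, your use of $E$ is circular: you assert that $f^{-1}(E)=E$ ``forces $f$ to carry contracted curves inside $E$ to contracted curves inside $E$,'' but that is essentially the statement to be proved --- $E$ in general contains plenty of non-contracted curves (e.g.\ when $E$ is a divisor contracted to a curve), so knowing $f(C)\subseteq E$ says nothing about whether $[f(C)]\in R_\Gamma$.

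The paper's actual mechanism, which you would need to supply, is a linear-algebra substitute for the missing finiteness: restrict $\mathrm{L}^1_{\mathbb{C}}(X)$ to $E$, show that $H:=(\pi^*\mathrm{L}^1_{\mathbb{C}}(Y))|_E$ has codimension one and is an irreducible component of the hypersurface $S=\{\alpha|_E : (\alpha|_E)^{\dim E}=0\}$ cut out by a nonzero homogeneous polynomial of degree $\dim E$. The projection formula (together with $f^*E=aE$) shows $S$ is $(f|_E)^*$-stable, and since a hypersurface has only \emph{finitely many} irreducible components, a power of $f$ fixes $H$. This finiteness of components of $S$ is what replaces your (unavailable) finiteness of extremal rays, and the invariance of $H$ is what lets one test, against all classes pulled back from $Y$, that the $f$-preimage of a contracted curve in $E$ is again contracted, giving $R_{\Gamma'}=R_\Gamma$ and hence $f(R_\Gamma)=R_\Gamma$. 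Without this (or an equivalent device), your proposal does not close.
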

\begin{proof}
Since $\dim \pi(E)<\dim E$, we may fix an irreducible curve $C$ with its current of integration $[C]\in R_{\Gamma}$ such that $C\subseteq E$. 
Then, any other contracted curve is a multiple of $C$ in the sense of currents of integration. Let $\textup{L}_{\mathbb{C}}^1(X):=\textup{H}_{\textup{BC}}^{1,1}(X)\otimes_{\mathbb{R}}\mathbb{C}$ and $\textup{L}_{\mathbb{C}}^1(Y):=\textup{H}_{\textup{BC}}^{1,1}(Y)\otimes_{\mathbb{R}}\mathbb{C}$. 
By \cite{Nak87}, our $Y$ also has at worst rational singularities. 
Let us recall the following exact sequence  in \cite[Proposition 8.1]{HP16},
\begin{align}\label{exact}
0\rightarrow \textup{L}^1_{\mathbb{C}}(Y)\xrightarrow{\pi^*} \textup{L}^1_{\mathbb{C}}(X)\xrightarrow{[\alpha]\mapsto \alpha\cdot[C]}\mathbb{C}\rightarrow 0.
\end{align}
Then, the pull-back $\pi^* \textup{L}^1_{\mathbb{C}}(Y)\subseteq \textup{L}_{\mathbb{C}}^1(X)$ is a linear subspace of codimension $1$.

For each $\xi\in \textup{L}^1_{\mathbb{C}}(X)$, denote by $\xi|_E:=i^*\xi\in \textup{L}^1_{\mathbb{C}}(X)|_E$  the restriction, where $i:E\hookrightarrow X$ is the natural inclusion. 
Consider the linear space $\textup{L}^1_{\mathbb{C}}(X)|_E$ and its subspace
$$H:=(\pi^*\textup{L}_{\mathbb{C}}^1(Y))|_E=i^*\pi^*\textup{L}_{\mathbb{C}}^1(Y)\subseteq i^*\textup{L}_{\mathbb{C}}^1(X)=\textup{L}_{\mathbb{C}}^1(X)|_E.$$
Therefore, $H$ is a linear subspace of $\textup{L}_{\mathbb{C}}^1(X)|_E$ of codimension $0$ or $1$. 
Fixing a K\"ahler class $\omega$ on $X$,  we have
$i^*\omega\cdot C>0$, since the restriction $\omega|_C$ is also a K\"ahler class on the curve $C$ (cf.~\cite[Proposition 3.5]{GK20} or \cite[Proposition 2.6]{Zho21}).

We claim that $H\subseteq \textup{L}_{\mathbb{C}}^1(X)|_E$ is a linear subspace of codimension 1.
Suppose the contrary that $H=\textup{L}_{\mathbb{C}}^1(X)|_E$. 
Then $i^*\omega\in \textup{L}_{\mathbb{C}}^1(X)|_E=H(=(\pi^*\textup{L}_{\mathbb{C}}^1(Y))|_E)$ and thus there exists some $\eta\in \textup{L}_{\mathbb{C}}^1(Y)$ such that $i^*\omega=i^*\pi^*\eta\in H$. 
By projection formula, $0<\deg \omega|_C=i^*\omega\cdot C=\eta\cdot \pi_*i_*C=0$, noting that $C$ is contracted by $\pi$, which is absurd. 
Therefore, $i^*\omega\not\in H$ and hence $H$ is a subspace of $\textup{L}_{\mathbb{C}}^1(X)|_E$ of codimension $1$.  

Now, let us consider the following subset of $\textup{L}_{\mathbb{C}}^1(X)|_E$:
$$S:=\{\alpha|_E~|~ \alpha\in \textup{L}_{\mathbb{C}}^1(X),~(\alpha|_E)^{\dim E}=0 \}.$$ 
\textbf{We claim that:  \hypertarget{$(**)$}{$(**)$} $S$ is a hypersurface in $\textup{L}_{\mathbb{C}}^1(X)|_E$ and $H=(\pi^*\textup{L}_{\mathbb{C}}^1(Y))|_E$ is an irreducible component of $S$ (with respect to the  Zariski topology)}; cf.~\cite[Proof of Lemma 6.2]{MZ18}. 
Suppose the claim \hyperlink{$(**)$}{$(**)$} holds for the time being. 
Since $f^{-1}(E)=E$ by our assumption, the operation $(f|_E)^*$ gives an automorphism of the subspace $\textup{L}_{\mathbb{C}}^1(X)|_E$.  
Similar to the pull-back of cycles in the normal projective case, we may assume $f^*E=aE$ for some $a>0$. 
For any $\beta\in \textup{L}_{\mathbb{C}}^1(X)$, by projection formula, 
$$((f|_E)^*(\beta|_E))^{\dim E}=((f^*\beta)|_E)^{\dim E}=(f^*\beta)^{\dim E}\cdot E=\frac{\deg f}{a}\beta^{\dim E}\cdot E=\frac{\deg f}{a}(\beta|_E)^{\dim E}.$$ 
Then $\beta|_E\in S$ if and only if $(f^*\beta)|_E\in S$. 
Therefore, $S$ is $(f|_E)^*$-stable. 
By our claim \hyperlink{$(**)$}{$(**)$}, with $f$ replaced by its power, $H$ is also $(f|_E)^*$-invariant. 
As a result, for any $\beta\in \text{L}_{\mathbb{C}}^1(Y)$,
\begin{align}\label{equa-sym}
(\pi^*\beta)|_E=(f|_E)^*((\pi^*\beta')|_E)=(f^*(\pi^*\beta'))|_E,	
\end{align}
for some $\beta'\in \textup{L}_{\mathbb{C}}^1(Y)$. 
By \cite[Lemma 8.1]{Zho21}, $f^{-1}(R_{\Gamma})=R_{\Gamma'}$ with $\Gamma'\in\overline{\textup{NA}}(X)$ being  a  positive closed $(n-1,n-1)$-current. 
Since $f^{-1}(E)=E$, for each curve $C$ with $[C]\in R_\Gamma$, there exists some curve $C'\subseteq E$ such that $f(C')=C$ and $[C']\in R_{\Gamma'}$.  
Write $f_*[C']=k[C]$ for some $k>0$. 
Then by Equation (\ref{equa-sym}), for every $\beta\in \textup{L}_{\mathbb{C}}^1(Y)$, there exists some $\beta'\in\textup{L}_{\mathbb{C}}^1(Y)$ such that
$$\pi^*\beta\cdot C'=f^*\pi^*\beta'\cdot C'=\pi^*\beta'\cdot kC=0,$$
noting that $\Gamma$ is perpendicular to $\pi^*\textup{L}_{\mathbb{C}}^1(Y)$ by the exact sequence (\ref{exact}). 
Hence, $R_{\Gamma'}=R_{\Gamma}$ by the choice of $\pi$, and thus $f(R_{\Gamma})=R_{\Gamma}$. 
Since $\pi$ is uniquely determined by $R_{\Gamma}$, it follows from the rigidity lemma that $f$ descends to a holomorphic $g$ on $Y$. 

Now, the only thing is to prove our claim \hyperlink{$(**)$}{$(**)$}. 
Let us fix a basis $\{v_1,\ldots, v_k\}$ of $\textup{L}_{\mathbb{C}}^1(X)|_E$. 
Then we have
$$S=\{(x_1,\ldots,x_k)\in\mathbb{C}^k~|~(\sum_{i=1}^k x_iv_i)^{\dim E}=0\}.$$
This implies that  $S$ is determined by a homogeneous polynomial of degree $\dim E$. 
Note that the polynomial has at least one coefficient non-vanishing. 
Indeed, the coefficient of the monomial $\prod_i x_i^{l_i}$ is the intersection number $v_1^{l_1}\cup\cdots\cup v_k^{l_k}$. 
For any K\"ahler class $\omega$ on $X$, the restriction $\omega|_E\in \textup{L}_{\mathbb{C}}^1(X)|_E$ is still K\"ahler (cf.~\cite[Proposition 3.5]{GK20} or \cite[Proposition 2.6]{Zho21}); hence $(\omega|_E)^{\dim E}>0$. 
Since $\omega|_E$ is a linear combination of the basis $\{v_j\}_{j=1}^k$, there exists an item $\prod_i v_i^{l_i}\neq 0$; hence $S$ is determined by a non-zero homogeneous polynomial, which proves the first part of our claim. 
Moreover, for any $(\pi^*\gamma)|_E\in H$ with $\gamma\in\textup{L}_{\mathbb{C}}^1(Y)$, we have
$$\int_E((\pi^*\gamma)|_E)^{\dim E}=\int_X(\pi^*\gamma)^{\dim E}\cdot E=\int_X \gamma^{\dim E}\cup (\pi_* [E])=0,$$
noting that $E$ is $\pi$-exceptional. 
This implies that $H\subseteq S$.
Since $H$ and $S$ have the same codimension in $\textup{L}_{\mathbb{C}}^1(X)|_E$,
the linear subspace $H$ of $S$ is thus an irreducible component of $S$, and the second part of our claim is  proved. 
\end{proof}

\begin{proposition}[{cf.~\cite[Proposition 8.4]{Zho21}}]\label{prodivisorial}
Let $f:X\rightarrow X$ be an int-amplified endomorphism of a normal $\mathbb{Q}$-factorial compact K\"ahler klt threefold $X$. If $\pi:X\rightarrow Y$ is a divisorial contraction of a $K_X$-negative extremal ray $R_\Gamma:=\mathbb{R}_{\ge 0}[\Gamma]$ generated by a positive closed $(2,2)$-current $\Gamma$. Then after iteration, $f$ descends to an int-amplified endomorphism on $Y$.
\end{proposition}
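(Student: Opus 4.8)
The plan is to follow the strategy of \cite[Proposition 8.4]{Zho21}, now invoking Lemma \ref{lemequivariant} (just proved) to obtain equivariance in the klt setting. First I would show that $\pi$ is $f$-equivariant after replacing $f$ by a power. Since $\pi$ is a divisorial contraction of the extremal ray $R_\Gamma$, its exceptional locus is a prime divisor $E\subseteq X$ with $\dim\pi(E)<\dim E=2$. The map $f$ being finite and surjective, $f^{-1}(E)$ is a divisor; I would argue, as in the projective case, that after replacing $f$ by a power we may assume $f^{-1}(E)=E$ (there are only finitely many divisors $D$ with $f^{-1}(D)\supseteq E$ appearing in the relevant chain, or one uses that $f^*$ permutes the finitely many $\pi$-exceptional prime divisors -- here there is just one -- so some power fixes it; combined with the fact that a prime divisor dominated under $f^{-1}$ by $E$ and having image of lower dimension under $\pi$ must again be $\pi$-exceptional, hence equal to $E$). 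With $f^{-1}(E)=E$ in hand, Lemma \ref{lemequivariant} applies verbatim and yields, after a further power, $f(R_\Gamma)=R_\Gamma$; by the rigidity lemma $f$ descends to a surjective endomorphism $g\colon Y\to Y$ with $\pi\circ f=g\circ\pi$.

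Next I would verify that $g$ is again int-amplified. The exact sequence (\ref{exact}),
\[
0\to \textup{L}^1_{\mathbb{C}}(Y)\xrightarrow{\pi^*}\textup{L}^1_{\mathbb{C}}(X)\to\mathbb{C}\to 0,
\]
is $f^*$-equivariant: $f^*$ preserves $\pi^*\textup{L}^1_{\mathbb{C}}(Y)$ because $\pi^*g^*=f^*\pi^*$, and it preserves the kernel-quotient structure since $f$ sends curves in $R_\Gamma$ to curves in $R_\Gamma$. Hence $g^*|_{\textup{H}^{1,1}_{\textup{BC}}(Y)}$ is, up to isomorphism, the restriction of $f^*|_{\textup{H}^{1,1}_{\textup{BC}}(X)}$ to the $f^*$-invariant subspace $\pi^*\textup{H}^{1,1}_{\textup{BC}}(Y)$; since every eigenvalue of $f^*$ has modulus $>1$ by int-amplifiedness of $f$, the same holds for every eigenvalue of $g^*$, so $g$ is int-amplified by the characterization recalled in the introduction (cf.~\cite[Theorem 1.1]{Zho21}).

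Finally I would note that $Y$ inherits the hypotheses needed for the statement to make sense and for later steps: $Y$ has only rational singularities by \cite{Nak87}, $Y$ is again $\mathbb{Q}$-factorial klt (this is standard for divisorial contractions of $K_X$-negative extremal rays in the MMP, and in the K\"ahler threefold setting is part of the output of \cite{HP16, DH22, DO22}), and $K_Y$ is $\mathbb{Q}$-Cartier. I expect the main obstacle to be the reduction to $f^{-1}(E)=E$: one must argue carefully that no new prime divisor can be ``created'' by $f^{-1}$ that interferes, using that $f$ is finite so $f^{-1}(E)$ is pure of codimension one and that each component of $f^{-1}(E)$ maps onto $E$, together with the extremal-ray characterization of $E$; once equivariance is secured, the descent of int-amplifiedness via the short exact sequence is routine. (The numerical bookkeeping with $f^*E=aE$, already used in the proof of Lemma \ref{lemequivariant}, also transfers to this setting without change.)
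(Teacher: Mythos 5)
Your proposal takes essentially the same route as the paper, whose entire proof is the one-line reduction to \cite[Proposition 8.4]{Zho21} with \cite[Lemma 8.3]{Zho21} replaced by Lemma \ref{lemequivariant}; your reconstruction of that argument (reduce to $f^{-1}(E)=E$ after iteration, apply Lemma \ref{lemequivariant} plus the rigidity lemma to descend $f$ to $g$ on $Y$, then read off int-amplifiedness of $g$ from the $f^*$-invariant subspace $\pi^*\textup{L}^1_{\mathbb{C}}(Y)$ in the exact sequence (\ref{exact})) is correct. One small caveat: of your two suggested routes to $f^{-1}(E)=E$, the one via ``$f^*$ permutes the $\pi$-exceptional prime divisors'' is circular before equivariance is known, whereas the other (finiteness of the backward chain of contractible negative divisors, as in \cite{MZ18} and \cite{Zho21}) is the mechanism actually used.
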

\begin{proof}
The proof is the same as \cite[Proposition 8.4]{Zho21} after replacing 	\cite[Lemma 8.3]{Zho21} by Lemma \ref{lemequivariant}.
\end{proof}

With the same proof as in \cite[Lemma 3.6]{Zha10}, we have the following lemma.
\begin{lemma}[{cf.~\cite[Lemma 8.5]{Zho21}, \cite[Lemma 3.6]{Zha10}}]\label{flipdescendingholomorphic}
Let $f:X\rightarrow X$ be a surjective endomorphism of a normal $\mathbb{Q}$-factorial compact K\"ahler klt threefold $X$  and $\sigma:X\dashrightarrow X'$ a flip with $\pi:X\rightarrow Y$ the corresponding flipping contraction of a $K_X$-negative extremal ray $R_\Gamma:=\mathbb{R}_{\ge 0}[\Gamma]$ generated by a  positive closed $(2,2)$-current $\Gamma$. Suppose that $R_{f_*\Gamma}=R_\Gamma$. Then the dominant meromorphic map $f^+:X^+\dashrightarrow X^+$ induced from $f$, is holomorphic. 
Both $f$ and $f^+$ descend to one and the same endomorphism of $Y$. 
\end{lemma}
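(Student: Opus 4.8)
The plan is to follow the strategy of \cite[Lemma 3.6]{Zha10} line by line, making the small adjustments needed to pass from the projective situation to the compact K\"ahler klt threefold. First I would recall the basic geometry of a flip: we have the flipping contraction $\pi:X\to Y$, its small birational image, and the flip $\pi^+:X^+\to Y$, with $\sigma=(\pi^+)^{-1}\circ\pi:X\dashrightarrow X^+$ an isomorphism in codimension one. The hypothesis $R_{f_*\Gamma}=R_\Gamma$ together with the uniqueness of the flipping contraction attached to the extremal ray $R_\Gamma$ forces $\pi$ to be $f$-equivariant, i.e.\ $f$ descends to a surjective endomorphism $g$ of $Y$; this is exactly the mechanism already used in the proof of Lemma \ref{lemequivariant} (rigidity lemma applied to the contraction determined by the ray). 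Here one should note that $Y$ again has only rational singularities by \cite{Nak87}, so the cohomological machinery of \cite{HP16} applies to $Y$ as well.

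Next I would transport $g$ through the flip. Since $\pi^+:X^+\to Y$ is also uniquely determined by $Y$ (it is the unique small projective-over-$Y$ modification on which $K_{X^+}$ is $\pi^+$-ample), the endomorphism $g$ of $Y$ lifts to a \emph{meromorphic} dominant self-map $f^+:X^+\dashrightarrow X^+$ making the obvious square commute; abstractly $f^+=\sigma\circ f\circ\sigma^{-1}$ on the big open locus where $\sigma$ is an isomorphism. The content of the lemma is that this $a$ priori only meromorphic $f^+$ is in fact holomorphic. For this I would argue exactly as in \cite{Zha10}: let $\Gamma^+\subseteq X^+\times X^+$ be the normalization of the graph of $f^+$, with projections $p,q$. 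One shows $p$ is finite (hence an isomorphism by Zariski's main theorem, since $X^+$ is normal) by ruling out that $p$ contracts a curve. If $p$ contracted a curve, the image of that curve in $X^+$ would have to lie in the (codimension $\ge 2$) indeterminacy locus of $f^+$; but $f^+$ is an isomorphism in codimension one away from a closed set of codimension $\ge 2$ that is $g$-related to the flipped/flipping loci, and $K_{X^+}$ being $\pi^+$-ample gives positivity that a contracted curve in the exceptional locus cannot have. More precisely one uses $K_{X^+}=(f^+)^*K_{X^+}$ type relations — which hold here because $K_X=f^*K_X$ (available since $K_X$ is numerically trivial is \emph{not} assumed; instead one uses that a flip is crepant so the discrepancy-based argument of \cite{Zha10} carries over verbatim) — to show the relevant intersection numbers force $p$ to be finite. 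The only genuinely transcendental point is that one works with positive closed $(n-1,n-1)$-currents and classes in $\textup{N}_1(X^+)$ rather than with $\mathbb{Q}$-divisors and $N_1$; but the needed pairing between $\textup{L}^1_{\mathbb C}$ and curves, the projection formula, and the cone theorem for K\"ahler klt threefolds are all available by \cite{HP16}, \cite{DH22}, \cite{DO22}, so no slicing or ampleness is required and the proof of \cite{Zha10} applies mutatis mutandis.

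Finally, once $f^+$ is holomorphic, the commutativity $\pi^+\circ f^+=g\circ\pi^+$ holds by construction on a dense open set and hence everywhere by continuity; combined with $\pi\circ f=g\circ\pi$ this gives that both $f$ and $f^+$ descend to the single endomorphism $g$ of $Y$, which is the assertion.

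I expect the main obstacle to be the verification that the graph projection $p$ is finite, i.e.\ the step showing $f^+$ has no indeterminacy: this is where one must translate Zhang's discrepancy/extremal-ray computation into the current-theoretic language and confirm that the flip being crepant (so that the pullback relation on canonical classes survives the E-MMP step produced by Lemma \ref{lemequivariant}) gives exactly the positivity needed. Everything else — the descent of $f$ to $Y$, the lift to a meromorphic $f^+$, and the final compatibility statement — is formal once Lemma \ref{lemequivariant} and the uniqueness of flips are in hand.
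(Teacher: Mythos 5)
Your top-level plan coincides with the paper's, which offers no argument beyond the sentence ``with the same proof as in \cite[Lemma 3.6]{Zha10}'': your first step (descent of $f$ to $g:Y\to Y$ from $R_{f_*\Gamma}=R_\Gamma$ via uniqueness of the contraction and the rigidity lemma, with \cite{Nak87} guaranteeing rational singularities of $Y$) and your last step (the identity $\pi^+\circ f^+=g\circ\pi^+$ by density and continuity) are exactly what is needed and are correct.

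The problem is your justification of the decisive step, the holomorphy of $f^+$. You invoke ``$K_{X^+}=(f^+)^*K_{X^+}$ type relations --- which hold here because $K_X=f^*K_X$'' and you describe the flip as ``crepant''. Neither holds under the hypotheses: the lemma assumes only that $f$ is a surjective endomorphism, so $K_X=f^*K_X+R_f$ with $R_f$ the ramification divisor, which there is no reason to expect to vanish --- indeed, in the E-MMP where flips actually occur $K_X$ is not pseudo-effective, so $f$ is typically ramified in codimension one --- and a flip is not crepant (discrepancies strictly increase across it). The actual mechanism of \cite[Lemma 3.6]{Zha10} (see also \cite[Lemma 6.5]{MZ18} and \cite[Lemma 8.5]{Zho21}) needs no such identity: one uses only the \emph{inequality} $f^*K_X\le K_X$, i.e.\ the effectivity of $R_f$, to produce a graded homomorphism $g^*\bigl(\bigoplus_m\pi_*\mathcal{O}_X(mK_X)\bigr)\to\bigoplus_m\pi_*\mathcal{O}_X(mK_X)$ of relative canonical algebras, and then the characterization of the flip as $X^+=\textup{Proj}_Y\bigoplus_m\pi_*\mathcal{O}_X(mK_X)$ --- a projective morphism over $Y$ even in the K\"ahler category, since flipping contractions are projective and the construction is local over $Y$ --- yields a holomorphic $f^+$ over $g$ directly, with no graph normalization, no finiteness-of-$p$ argument, and no discrepancy computation. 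So the transcendental adaptations you anticipate (currents versus divisors) are not where the content lies; the content is the functoriality of the relative canonical ring, and your sketch replaces it with an identity that fails, so as written the key step does not go through.
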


\begin{proposition}[{cf.~\cite[Proposition 8.6]{Zho21}, \cite[Lemma 6.5]{MZ18}}]\label{proflipdescending}
Let $f:X\rightarrow X$ be an int-amplified endomorphism of a normal $\mathbb{Q}$-factorial compact K\"ahler klt threefold $X$ and $\sigma:X\dashrightarrow X'$ a flip with $\pi:X\rightarrow Y$ the corresponding flipping contraction of a $K_X$-negative extremal ray $R_\Gamma:=\mathbb{R}_{\ge 0}[\Gamma]$ generated by a  positive closed $(2,2)$-current $\Gamma$. Then there exists a $K_X$-flip of $\pi$, $\pi^+:X^+\rightarrow Y$ such that $\pi=\pi^+\circ\sigma$ and for some $s>0$, the commutativity is $f^s$-equivariant.
\end{proposition}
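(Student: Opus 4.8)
The plan is to reduce Proposition \ref{proflipdescending} to Lemma \ref{flipdescendingholomorphic} by first making the flipping contraction $f$-equivariant, mimicking the proof of \cite[Proposition 8.6]{Zho21} and \cite[Lemma 6.5]{MZ18}. First I would recall that $\sigma:X\dashrightarrow X'$ is a flip associated to the flipping contraction $\pi:X\to Y$, and that the exceptional locus $E$ of $\pi$ is an analytic subvariety of $X$ with $\dim\pi(E)<\dim E$ (indeed $\pi$ contracts a curve, so the flipping locus has dimension at least one, while its image is finite). Since $f$ is int-amplified, by \cite[Lemma 8.1]{Zho21} the preimage $f^{-1}(R_\Gamma)=R_{\Gamma'}$ is again a single ray generated by a positive closed $(2,2)$-current. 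The first real task is to show $f^{-1}(E)=E$ after replacing $f$ by a power. This should follow because an int-amplified endomorphism permutes the finitely many $K_X$-negative extremal rays that arise (by the cone theorem for K\"ahler klt threefolds, see \cite{DH22}, \cite{DO22}), hence permutes the corresponding flipping loci; passing to a power we may assume each such ray, in particular $R_\Gamma$, is fixed, so that $f(R_\Gamma)=R_\Gamma$, and then $E$, being the union of curves whose classes lie in $R_\Gamma$, satisfies $f^{-1}(E)=E$.

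Once $f^{-1}(E)=E$ is in place, I would invoke Lemma \ref{lemequivariant}: with this $E$ and this $K_X$-negative extremal ray $R_\Gamma$, up to replacing $f$ by a further power we get $f(R_\Gamma)=R_\Gamma$, i.e.\ the flipping contraction $\pi$ is $f$-equivariant, and $f$ descends to a holomorphic endomorphism $g$ of $Y$. In particular $R_{f_*\Gamma}=R_\Gamma$, which is exactly the hypothesis of Lemma \ref{flipdescendingholomorphic}. Applying that lemma, the induced meromorphic map $f^+:X^+\dashrightarrow X^+$ on the flipped space is in fact holomorphic, and both $f$ and $f^+$ descend to the same endomorphism $g$ of $Y$; consequently $\pi^+:X^+\to Y$ is the $K_X$-flip of $\pi$ with $\pi=\pi^+\circ\sigma$, and the whole diagram is $f^s$-equivariant for the power $s$ accumulated above. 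It remains only to note that $f^+$ is again int-amplified: this follows because $\sigma$ is an isomorphism in codimension one, so $(\sigma^{-1})^*$ identifies $\textup{H}^{1,1}_{\textup{BC}}(X)$ and $\textup{H}^{1,1}_{\textup{BC}}(X^+)$ compatibly with the pull-backs of $f$ and $f^+$ (after possibly correcting by the descent to $Y$), whence the eigenvalues of $(f^+)^*$ on $\textup{H}^{1,1}_{\textup{BC}}(X^+)$ coincide with those of $f^*$ on $\textup{H}^{1,1}_{\textup{BC}}(X)$ and all have modulus $>1$.

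The main obstacle I anticipate is the step establishing $f^{-1}(E)=E$ — more precisely, ensuring that after finitely many iterations every flipping locus in play is $f$-invariant, which requires knowing that the relevant $K_X$-negative extremal rays form a finite set permuted by $f_*$ (or that the flip under consideration is "the" flip attached to a distinguished ray that int-amplified endomorphisms must preserve). In the projective case this is handled via the relative cone theorem and the finiteness of extremal contractions of bounded length; in the K\"ahler klt threefold setting one has to cite the corresponding cone and contraction theorems from \cite{DH22} and \cite{DO22}. A second, more technical point is verifying that the pull-back of the divisor $E$ behaves well, i.e.\ that $f^*E=aE$ for some $a>0$ so that the intersection-theoretic computations inside Lemma \ref{lemequivariant} apply; this is where the $\mathbb{Q}$-factoriality hypothesis on $X$ is used, allowing us to treat $E$ as a $\mathbb{Q}$-Cartier divisor and pull it back. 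Apart from these inputs, the proof is a direct assembly of Lemma \ref{lemequivariant}, Lemma \ref{flipdescendingholomorphic}, and \cite[Lemma 8.1]{Zho21}, exactly paralleling \cite[Proposition 8.6]{Zho21}.
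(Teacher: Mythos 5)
Your proposal matches the paper's approach: the paper's proof is a one-line reduction to \cite[Proposition 8.6]{Zho21}, with \cite[Lemmas 8.3 and 8.5]{Zho21} replaced by Lemmas \ref{lemequivariant} and \ref{flipdescendingholomorphic}, which is exactly the assembly you describe (establish $f^{-1}(E)=E$ for the flipping locus after iteration, apply Lemma \ref{lemequivariant} to make $\pi$ equivariant, then Lemma \ref{flipdescendingholomorphic} to descend to $X^+$ and $Y$). The only caveat is that your justification of $f^{-1}(E)=E$ via ``finitely many $K_X$-negative extremal rays'' is slightly imprecise (the cone theorem only gives discreteness away from $K_X^\perp$); the cited proofs instead use that $(f^i)^{-1}R_\Gamma$ is always a contractible extremal ray together with the finiteness of $f^{-1}$-periodic loci for int-amplified endomorphisms, but this is the same mechanism you point to and does not change the route.
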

\begin{proof}
The proof is the same as \cite[Proposition 8.6]{Zho21} after replacing \cite[Lemmas 8.3 and 8.5]{Zho21} by Lemmas \ref{lemequivariant} and \ref{flipdescendingholomorphic}.
\end{proof}

With all the preparations settled, we now can prove Theorem \ref{E-MMP-threefolds}.

\begin{proof}[Proof of Theorem \ref{E-MMP-threefolds}]
The proof is the same as \cite[Theorem 1.5]{Zho21} after replacing \cite[Theorem 1.1]{HP15} by the pure case of \cite[Theorem 1.2]{DH22}, and replacing \cite[Theorem 8.7]{Zho21} by Lemmas \ref {lemequivariant}, \ref{flipdescendingholomorphic} and Propositions \ref{prodivisorial}, \ref{proflipdescending}.
\end{proof}

\subsection{Proof of Corollary \ref{simply-connected}}
In the subsection, we prove Corollary \ref{simply-connected} by running the E-MMP.
Note that, for a $Q$-complex torus $X$ admitting an int-amplified endomorphism $f$, it has been shown that there is no proper $f^{-1}$-periodic analytic subvariety in $X$ (cf.~\cite[Lemma 5.4]{Zho21}). 
We shall heavily use this fact to show that the end product of the E-MMP for such $X$ in Corollary \ref{simply-connected} turns out to be a single point. 
\begin{proof}[Proof of Corollary \ref{simply-connected}]
First, we show  the canonical divisor $K_X$ is not pseudo-effective.
Suppose the contrary that   $K_X$ is pseudo-effective. 
Then $X$ is a $Q$-complex torus by Theorem \ref{thm-Q-torus}. 
Since $X$ is simply connected, the irregularity $q(X)=0$. 
Since there exists a complex $3$-torus $T$ such that $T\rightarrow X$ is  quasi-\'etale and thus \'etale by the purity of branch theorem (cf.~\cite{GR55}),  
our $T\rightarrow X$ is trivial and $X$ itself is a complex torus.
However, this contradicts $q(X)=0$. 
Therefore, $K_X$ is not pseudo-effective.

Second, we show that $X$ is projective.
Suppose the contrary that $X$ is non-projective. 
By \cite[Theorem 1.1]{HP15} and Theorem \ref{E-MMP-threefolds}, with $f$ replaced by its power, there exists an $f$-equivariant minimal model program from $X$ to a K\"ahler surface $S$ with only klt singularities. 
Since $X$ is non-algebraic, our $S$ is non-uniruled; for otherwise, the MRC fibration of $X$ will have the target of dimension $\le 1$ (and hence $X$ is projective; see~\cite[Introduction]{HP15}). 
Therefore, $K_S$ is pseudo-effective. 
Then $S$ is a $2$-dimensional $Q$-complex torus (cf.~\cite[Theorem 1.4]{Zho21}), i.e., there is a finite morphism $T\to S$ from a complex torus $T$, which is \'etale in codimension one. 
Since $S$ does not contain any totally invariant periodic proper subsets (cf.~\cite[Lemma 5.4]{Zho21}), there does not exist any flips or divisorial contractions when running the E-MMP for $X$, noting that $\dim X=3$. 
Hence, with $f$ replaced by its power, $\pi:X\rightarrow S$ is an  $f$-equivariant Fano contraction of a $K_X$-negative extremal ray. 
Let $\widetilde{X}$ be the normalization of the main component of $X\times_S T$. 
Then we have the following commutative diagram:
$$\xymatrix{\widetilde{X}\ar[drrr]^{p_2\circ n}\ar[dr]^n\ar[ddr]_{\nu_X}&&&\\
&X\times_ST\ar[rr]_{p_2}\ar[d]^{p_1}&&T\ar[d]^{\nu_S}\\
&X\ar[rr]_\pi&&S
}$$
where $n$ is the normalization and $p_1$ and $p_2$ are natural projections. \textbf{We claim that $\nu_X$ is \'etale.} 
Suppose the claim for the time being. 
Then  $1=\deg \nu_X=\deg \nu_S$, noting that $X$ is simply connected. 
So $S=T$ is a complex $2$-torus and $0<q(S)\le q(X)=0$, a contradiction. 
Hence, $X$ is projective. 

Now we are left  to prove our claim. Since $\nu_S$ is \'etale in codimension one, the  branch locus of $\nu_S$ consists of finitely many points. Note that \'etaleness is stable under base change and  $\pi$ is equidimensional. Therefore, $p_1$ is \'etale outside finitely many curves, which is a codimension $\ge 2$ subset. 
Since $X$ is smooth, around the \'etale locus of $p_1$, $X\times_S T$ is smooth and hence normal. 
So $n$ is biholomorphic around the \'etale locus of $X\times_S T$. This implies that $\nu_X$ is \'etale outside a codimension $\ge 2$ subset. By the purity of branch theorem (cf.~\cite{GR55}), $\nu_X$ is \'etale. 
This proves our claim.

Finally, we are now in the case when $X$ is projective and $K_X$ is not pseudo-effective.
By the same proof as in the second step, we know that the end product $Z$ of the E-MMP has dimension $\le 1$.
If $Z$ is a single point, then it follows from Theorem \ref{E-MMP-threefolds} or \cite[Theorem 1.10]{Men20} that $X$ is rationally connected. 
If $\dim Z=1$, then $q(Z)\le q(X)=0$ which implies that $Z$ is rational and hence $X$ is also  rationally connected; see \cite[Corollary 1.3]{GHS03}.
Thus, $X$ is (projective and) rationally connected. 
Applying \cite[Corollary 1.4]{Yos21}, we see that $X$ is of Fano type and our theorem is thus proved.
\end{proof}

\end{document}